\documentclass[12pt]{article}
\usepackage{amsmath, amssymb, amsthm}
\usepackage{algorithm}
\usepackage{algpseudocode}
\usepackage{graphicx}
\usepackage{float} 
\usepackage{tikz}
\usetikzlibrary{calc,through,intersections, arrows, decorations.markings, matrix,shapes,decorations}
\usepackage{braids}
\usepackage{thmtools}
\usepackage{thm-restate}
\usetikzlibrary{braids}

\theoremstyle{plain}
\newtheorem{theorem}{Theorem}[section]
\newtheorem{lemma}[theorem]{Lemma}
\newtheorem{proposition}[theorem]{Proposition}

\theoremstyle{definition}
\newtheorem{definition}[theorem]{Definition}

\newtheorem*{remark}{Remark}

\title{Every Plat Presentation Admits a Positive Bounded Braid Representative}
\author{Seth Hovland}

\begin{document}

\maketitle

\begin{abstract}
Classifying link types via their $n$-bridge positions requires navigating vast Hilden double coset classes  $H_{2n} \backslash B_{2n} / H_{2n}$. We provide a restriction on this by proving that every Hilden double coset admits a positive braid representative whose Dehornoy minimum is explicitly bounded. The proof relies on the key observation that $\Delta \in H_{2n}$, which allows us to utilize the Garside left-greedy normal form to remove all powers of $\Delta$ from any given braid representative. We provide an explicit upper bound on its Dehornoy minimum in terms of the Garside length of the initial presentation. This bounded representative restricts the algebraic space of plat presentations, providing a potential computational tool for studying bridge isotopy classes and offering a constructive step toward algebraic approaches to the Bridge Finiteness Conjecture.
\end{abstract}

\section{Introduction}

In classical knot theory, the relationship between a braid presentation and the geometric properties of its corresponding link heavily depends on the choice of closure. Under the standard braid closure, enforcing algebraic positivity on a braid representative severely restricts the topology of the link; closed positive braids yield only positive links, a highly specialized class known to be fibered and deeply constrained in their cross-cap numbers and signatures. 

In contrast, the equivalence classes governing plat presentations characterized topologically as bridge isotopy classes and algebraically as Hilden double cosets $H_{2n} \backslash B_{2n} / H_{2n}$ (See \cite{hovland2024bridgepositionsplatpresentations}) exhibit a very different paradigm: \emph{positivity in plat presentations is a universal feature shared by all link types.}

Recently, Ozawa \cite{Ozawa2026} proved the existence of canonical representatives of Hilden double coset classes by defining a Dehornoy-type ordering on the set of Hilden double coset classes. In this paper, we establish a parallel result using a strictly constructive approach: we prove that every Hilden double coset admits a minimal positive braid representative and provide an explicit upper bound on its Dehornoy minimum. Additionally, it was shown in \cite{Ozawa2026} that the Bridge Finiteness Conjecture would follow from a topological bound on canonical representatives of Hilden double cosets. We show that such a bound would follow from a corresponding bound on the Garside length of a plat presentation.
We summarize our result as Theorem \ref{thm:main} below:
\begin{restatable}{theorem}{MainTheorem}\label{thm:main}
Given a braid word $\beta \in B_{2n}$ whose Garside decomposition contains $p$ permutation braids, its Hilden double coset $[\beta]$ admits a positive representative whose Dehornoy minimum is bounded above by $p(2n-1).$
\end{restatable}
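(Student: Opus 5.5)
The plan is to work in the Garside structure of $B_{2n}$ and use the fact that $\Delta \in H_{2n}$. This lets us discard the $\Delta$-power of a normal form without leaving the double coset. Write $\beta$ in left-greedy normal form
\[
\beta = \Delta^{k} s_1 s_2 \cdots s_r ,
\]
with $k \in \mathbb{Z}$ and each $s_i$ a proper simple (permutation) braid, $s_i \neq 1, \Delta$. The Garside decomposition then consists of $p$ permutation braids, where $p$ counts the $s_i$ together with the $|k|$ copies of $\Delta^{\pm1}$; in particular $r \le p$.

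The first step is to verify that $\Delta \in H_{2n}$. The Hilden subgroup is generated by $\sigma_1$, by the elements $\sigma_2\sigma_1\sigma_3\sigma_2$ (and their analogues $\sigma_{2i}\sigma_{2i-1}\sigma_{2i+1}\sigma_{2i}$), and by $\sigma_2\sigma_1^2\sigma_2$-type elements. Since $\Delta$ is the half twist, it preserves the plat cap configuration up to isotopy: it rotates the caps as a rigid half-turn. One can also exhibit $\Delta$ explicitly as a product of these generators, e.g.\ by induction on $n$ via $\Delta_{2n} = \Delta_{2n-2}\cdot(\sigma_{2n-1}\cdots\sigma_1)(\sigma_{2n-1}\cdots\sigma_2)$, regrouping the last two strands as a cap being carried across. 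Given $\Delta \in H_{2n}$, we have $\Delta^{-k} \in H_{2n}$, and left multiplication gives $[\beta] = [s_1\cdots s_r]$. The representative $\beta' = s_1\cdots s_r$ is a product of positive permutation braids, hence a positive braid.

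The second step bounds the Dehornoy minimum, which I read as the minimal number of occurrences of the least-index generator, or more generally a handle-counting quantity, controlled by word length. Each permutation braid $s_i$ is represented by a positive word. Its $\sigma_1$-content, that is, the number of crossings involving the bottom strand position, is at most $2n-1$: in a permutation braid any two strands cross at most once, and a fixed strand crosses at most the other $2n-1$ strands. So each factor contributes at most $2n-1$ to the Dehornoy count. Concatenating gives at most $r(2n-1) \le p(2n-1)$.

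The main obstacle is that second step. One needs the precise definition of the Dehornoy minimum used in the paper and a proof that a single simple braid contributes at most $2n-1$. My first attempt is to argue from the permutation: the occurrences of $\sigma_1$ in a reduced positive word for $s_i$ correspond to crossings between strands passing through positions $1$ and $2$. Each such crossing involves the strand currently at position $1$. I would then show that these crossings can be attributed to distinct pairs of strands sharing a common strand, or at least bounded by $2n-1$, by choosing the reduced word that pushes $\sigma_1$'s as late as possible, using the ``$\sigma_1$-minimal'' word in the style of Dehornoy handle reduction. A further subtlety is the case $k<0$. There the normal form may be better expressed using $\Delta^{-1}$ and conjugation $\tau(s) = \Delta s \Delta^{-1}$. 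Since $\Delta$ and $\Delta^{-1}$ both lie in $H_{2n}$, I expect the same cancellation to work. The simple factors are only permuted by $\tau$ if we move $\Delta$'s across them, and $\tau$ sends simple braids to simple braids, so the bound is unaffected.
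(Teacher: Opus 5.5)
Your first step is correct and matches the paper: $\Delta\in H_{2n}$, so left multiplication by $\Delta^{-k}$ leaves the positive representative $s_1\cdots s_r$ whatever the sign of $k$, and no $\Delta$ has to be moved past the $s_i$. Your inductive factorization works as a substitute for the paper's half-twist/Tawn argument, since $(\sigma_{2n-1}\cdots\sigma_1)(\sigma_{2n-1}\cdots\sigma_2)=\sigma_{2n-1}C_{2n-2}\cdots C_2$ with $C_{2i}=\sigma_{2i}\sigma_{2i-1}\sigma_{2i+1}\sigma_{2i}$.

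The genuine gap is in the second step: you never use what the Dehornoy minimum actually is. In the paper, $B_D$ is the least $m\ge 0$ such that some $\gamma\in[\beta]$ satisfies $1<_D\gamma<_D\Delta^m$. That is a position in the Dehornoy order, not a count of $\sigma_1$-letters or handles. Bounding the number $N$ of letters $\sigma_1$ in a positive word for $s_1\cdots s_r$ says nothing about $B_D$ until letter counts are converted into an order bound, and your proposal has no such conversion. The paper supplies it with the Malyutin--Netsvetaev lemma (Lemma~\ref{lemma:malyutin}): a word with $a$ letters $\sigma_1$ and $b$ letters $\sigma_1^{-1}$ represents a braid in $(\Delta^{-2b},\Delta^{2a})$. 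For your positive representative this gives $1<_D s_1\cdots s_r<_D\Delta^{2N}$, hence $B_D\le 2N$.

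The count itself also needs repair. Your reason for ``at most $2n-1$ per simple factor'' is wrong as stated. The $\sigma_1$-crossings involve whichever strand currently sits in position $1$, not one fixed strand. The correct argument is that successive occupants of position $1$ are pairwise distinct: a displaced strand lies to the right of its replacement and cannot cross it again in a permutation braid.

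A better route is to write every simple braid as $s=(\sigma_{k-1}\cdots\sigma_1)\,s'$, with $s'$ a positive word in $\sigma_2,\dots,\sigma_{2n-1}$. To do this, first bring the strand that ends in position $1$ all the way to the left. Each factor then has a positive word with at most one $\sigma_1$.

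This choice matters for the constant. Because the lemma yields exponent $2N$, the count $N\le p(2n-1)$ (yours, and the one in the paper's proof) only gives $B_D\le 2p(2n-1)$. The paper's one-line deduction of $p(2n-1)$ passes over this factor of $2$. With $N\le p$ you get $B_D\le 2p\le p(2n-1)$ for $n\ge 2$.
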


\section{Background}
\subsection{Positive Braids and Plat Closures}

Let $B_n$ denote the braid group on $n$ strands with standard Artin generators $\sigma_1, \dots, \sigma_{n-1}$. We denote by $B_n^+$ the monoid of \emph{positive braids}, consisting of words expressed entirely in the generators $\sigma_i$ without inverses. As established by Garside \cite{Garside1969}, the natural embedding of $B_n^+$ into $B_n$ provides a foundation for canonical factorizations and algorithmic solutions to the word problem.

We obtain a link from an even stranded braid $\beta \in B_{2n}$ by taking its \emph{plat closure}, formed by identifying the bottom of the $i$-th strand to the $(i+1)$-st strand for all odd $i$, and performing the identical pairwise identification at the top. It is a classical result that every link type can be represented as the plat closure of some braid.

Note that local isotopies near the top and bottom bridges do not alter the topological link type, therefore, plat presentations are highly non-unique. Birman showed in \cite{Birman1976OnTS} that two braids yield isotopic plat closures if and only if, after a finite number of stabilizations, they lie in the same double coset of the \emph{Hilden subgroup}.

\subsection{The Garside Element and the Hilden Subgroup}\label{sec:garsidehildendefns}

Let $\Delta \in B_n$ denote the braid
\[
\Delta := (\sigma_1\sigma_2 \cdots \sigma_{n-1})
          (\sigma_1\sigma_2 \cdots \sigma_{n-2})
          \cdots
          (\sigma_1\sigma_2)(\sigma_1).
\]
This braid is called the \emph{Garside element} (or \emph{fundamental element}). It plays a central role in the algebraic structure of the braid group.  For instance, its square generates the center of $B_n$ and therefore commutes with every braid. Moreover, $\Delta$ itself ``almost commutes'' with elements of $B_n$ by the relation $\Delta\sigma_i= \sigma_{n-i}\Delta.$  We focus particularly on the use of $\Delta$ in the construction of normal forms for braids. 

\begin{proposition}[Garside {\cite{Garside1969}}]
Every braid $\beta \in B_n$ admits a factorization
\[
\beta = \Delta^m P_0,
\]
where $m \in \mathbb{Z}$ and $P_0$ is a positive braid containing no factor equal to $\Delta$. Among all positive representatives of $P_0$, there is a unique lexicographically minimal word, which we also denote by $P_0$.
\end{proposition}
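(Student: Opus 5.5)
The plan is to construct the factorization directly from an arbitrary word for $\beta$, then extract the maximal power of $\Delta$, and finally argue uniqueness using the embedding $B_n^+ \hookrightarrow B_n$ recalled above. Throughout I read ``$P_0$ contains no factor equal to $\Delta$'' as ``$\Delta$ is not a left divisor of $P_0$ in $B_n^+$''. Since $\Delta$ is a left divisor of a positive braid iff it is a right divisor (apply the flip $\tau$ below), it does not matter which side is used.

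First, the two facts about $\Delta$ I will use. (i) For each $i$ there is a positive braid $X_i$ with $\Delta = X_i\sigma_i$. This is the standard fact that every generator right-divides $\Delta$, and it can be read off the formula for $\Delta$ together with $\Delta\sigma_i=\sigma_{n-i}\Delta$. Hence $\sigma_i^{-1} = \Delta^{-1}X_i$. (ii) The relation $\Delta\sigma_i=\sigma_{n-i}\Delta$ shows that conjugation by $\Delta$ is the automorphism $\tau:\sigma_i\mapsto\sigma_{n-i}$. Concretely, $w\Delta^{-1} = \Delta^{-1}\tau(w)$, and $\tau$ maps positive words to positive words.

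Existence then goes as follows. Write $\beta$ as a word in the $\sigma_i^{\pm1}$ and replace every $\sigma_i^{-1}$ by $\Delta^{-1}X_i$. Using (ii), push every $\Delta^{-1}$ to the far left; the positive pieces it passes are replaced by their $\tau$-images, which stay positive. This gives $\beta=\Delta^{-k}P$ with $k\ge 0$ and $P\in B_n^+$. Next, repeatedly test whether $\Delta$ left-divides the current positive braid and, if so, strip it off, so that $P=\Delta^{j}P_0$ with $\Delta\nmid P_0$. This terminates: the defining relations of $B_n^+$ are homogeneous, so word length is a well-defined additive function on $B_n^+$. Since $\Delta$ has length $n(n-1)/2>0$, we get $j\le |P|/|\Delta|$. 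Setting $m=j-k$ gives $\beta=\Delta^mP_0$.

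For uniqueness of the pair $(m,P_0)$, suppose $\Delta^mP_0=\Delta^{m'}P_0'$ with $m\ge m'$. Then $P_0'=\Delta^{m-m'}P_0$ holds in $B_n$. By Garside's theorem that $B_n^+$ embeds in $B_n$, this equality also holds in $B_n^+$. So if $m>m'$, then $\Delta$ left-divides $P_0'$, which is a contradiction; hence $m=m'$ and $P_0=P_0'$. Finally, all positive words representing the element $P_0$ have the same length (homogeneity) over a finite alphabet, so there are finitely many of them. A finite nonempty set of words has a unique lexicographically least element, which we take as the word $P_0$.

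The main obstacle is the embedding of $B_n^+$ into $B_n$, which is Garside's cancellativity and common-multiple theory. I would cite it rather than reprove it, exactly as the paper does when introducing $B_n^+$. A secondary point needing care is fact (i). I would verify it by induction on $n$ using $\Delta_n=\Delta_{n-1}(\sigma_{n-1}\cdots\sigma_1)$ together with the $\tau$-symmetry.
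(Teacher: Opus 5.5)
Your proof is correct. The paper gives no argument of its own for this proposition and simply cites Garside; what you have written is the standard Garside argument: substitute $\sigma_i^{-1}=\Delta^{-1}X_i$, move each $\Delta^{-1}$ to the left via the flip $\tau$, strip off the maximal power of $\Delta$ using the length function on $B_n^+$, and use the embedding $B_n^+\hookrightarrow B_n$ for uniqueness of $(m,P_0)$, with the lexicographic claim settled by your observation that all positive words for $P_0$ have the same length and so form a finite set.
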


\begin{figure}[h!]
    \centering
    \begin{tikzpicture}
     \pic[braid/.cd,
     number of strands=4,
     line width=1.5pt,
     name prefix=braid, 
     height=.25in,
     width=0.25in,
     style=thin] at (0,0) {braid={s_1 s_2 s_1 s_3 s_2 s_1}};
    \end{tikzpicture}
    \caption{The Garside element $\Delta$ in $B_4$.}
\end{figure}

This expression of a braid as a power of $\Delta$ followed by a positive braid not containing $\Delta$ as a factor is called a \emph{Garside decomposition}.  Occasionally, we will use the fact that the positive braid $P_0$ admits a decomposition into $P_0=p_1p_2\cdots p_p$ where each $p_i$ is a \emph{permutation braid}. We refer the reader to \cite{BLMCGs} and \cite{WordprocessinginGroups} for further background, including applications to the word and conjugacy problems in braid groups.

We now turn to the Hilden Subgroup of the braid group which governs the equivalence of plat closures.

\begin{definition}[Hilden Subgroup {\cite{Hilden1975GeneratorsFT}}]\label{Defn:Hildensubgroup}
Let $H_{2n} \le B_{2n}$ denote the subgroup consisting of all braids that preserve plat closures. Equivalently, a braid $\gamma \in B_{2n}$ lies in $H_{2n}$ if and only if, for every $\beta \in B_{2n}$, the plat closures of $\gamma\beta$ and $\beta\gamma$ represent the same link type as the plat closure of $\beta$.
\end{definition}

Hilden showed that $H_{2n}$ is generated by the finite set
\[
\{\sigma_1,\; \sigma_2\sigma_1^2\sigma_2,\;
\sigma_{2i}\sigma_{2i-1}\sigma_{2i+1}\sigma_{2i}
\mid 1 \le i \le n-1\},
\]
where $\sigma_i$ are the standard Artin generators of $B_{2n}$.

A presentation for $H_{2n}$, including a complete set of relators, was later given by Tawn \cite{Tawn_2008}. Geometrically, elements of $H_{2n}$ correspond to ambient isotopies supported near the top or bottom bridges of a plat presentation, which return each bridge to its original height. As a consequence, multiplication on the left or right by elements of $H_{2n}$ updates the underlying braid word but leaves the link type of the plat closure invariant.

When comparing plat presentations, we will frequently refer to these isotopies as \emph{double coset moves}, which algebraically correspond to multiplication on the left or right by elements of the Hilden subgroup.

\begin{remark}
Definition~\ref{Defn:Hildensubgroup} emphasizes the knot-theoretic interpretation of the Hilden subgroup to build geometric intuition. For a strictly algebraic characterization via the mapping class group actions used in computation, see standard reference texts like \cite{BirmanPlatsLinksBraids}.
\end{remark}

\begin{figure}[H]
    \centering
    \includegraphics[width=0.75\linewidth]{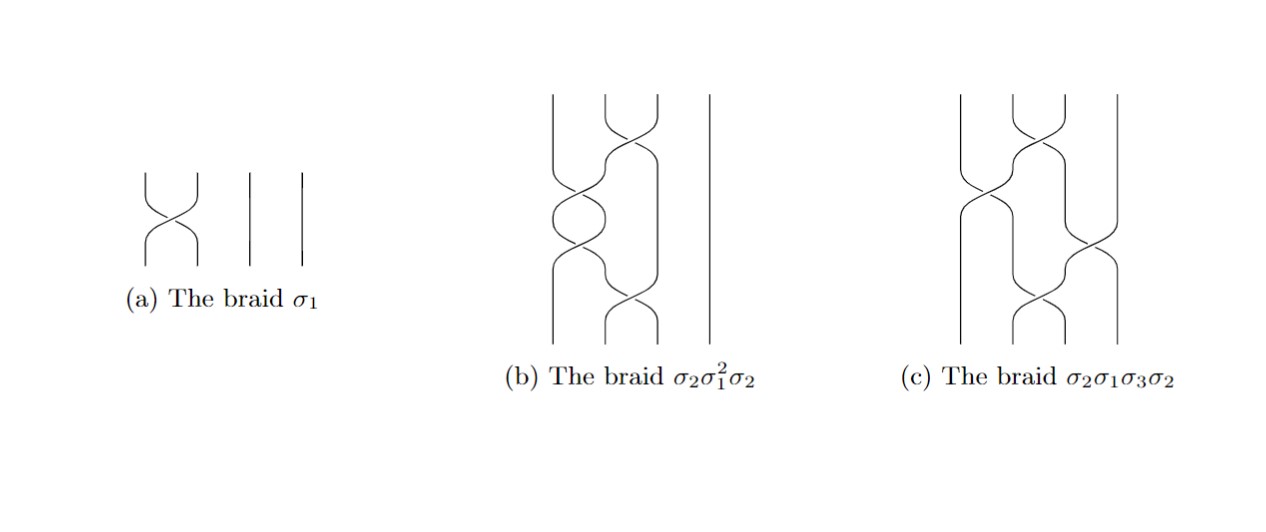}
    \caption{Generators of the Hilden subgroup $H_4$.}
    \label{fig:generatorsH4}
\end{figure}

\subsection{The Hilden Subgroup Membership Problem}

The main goal of this section is to provide a geometric method of identifying Hilden elements.

\begin{proposition}{\cite{SSpolytimealgorthims,Tawn2009}}
The membership problem for the Hilden subgroup is decidable.
\end{proposition}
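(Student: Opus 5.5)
The statement is cited from \cite{SSpolytimealgorthims,Tawn2009}, so the plan is to reduce membership in $H_{2n}$ to a decidable question about the mapping class group of the $2n$-punctured disk, rather than to build an algorithm from scratch. Identify $B_{2n}$ with the mapping class group $\mathrm{MCG}(D_{2n})$ of the disk with $2n$ punctures (Artin's theorem). Fix the standard family $\mathcal{A} = \{a_1,\dots,a_n\}$ of disjoint straight arcs in $D_{2n}$, where $a_i$ joins puncture $2i-1$ to puncture $2i$; these arcs are the bridges of the plat closure. The first step is to prove the characterization
\[
H_{2n} = \{\gamma \in B_{2n} : \gamma(\mathcal{A}) \simeq \mathcal{A} \text{ as a set, up to isotopy rel punctures}\},
\]
that is, $H_{2n}$ is the stabilizer of the isotopy class of the arc system. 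One inclusion is a direct check: each Hilden generator $\sigma_1$, $\sigma_2\sigma_1^2\sigma_2$, and $\sigma_{2i}\sigma_{2i-1}\sigma_{2i+1}\sigma_{2i}$ maps $\mathcal{A}$ to itself up to isotopy, either by rotating one arc, sliding one arc around a neighbouring one, or swapping two adjacent arcs. For the other inclusion I would cite Hilden's theorem, which says these elements generate the stabilizer; equivalently, I would use the geometric description of $H_{2n}$ given in the text.

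With this characterization, the algorithm for an input word $\gamma$ is to compute a normal-form representative of each image arc $\gamma(a_i)$ and compare it with the arcs of $\mathcal{A}$. I would encode each arc by reduced intersection data with a fixed triangulation or train track: either Dynnikov coordinates, or the reduced cyclic word recording how the arc crosses the horizontal segments between consecutive punctures. The action of each $\sigma_j^{\pm1}$ on this encoding is given by explicit piecewise-linear formulas (Dynnikov) or by a local rewriting rule, so $\gamma(a_i)$ can be computed in time polynomial in $|\gamma|$. Two arcs are isotopic if and only if their reduced encodings agree, and minimal position can be reached by removing bigons. Hence $\gamma \in H_{2n}$ if and only if the multiset $\{\gamma(a_i)\}$ equals $\mathcal{A}$, and this can be decided. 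An alternative route is to use Tawn's finite presentation together with the fact that $H_{2n}$ has finite index in no sense, but is the stabilizer of a point in an action with computable orbits. I would keep the arc-stabilizer argument as the main line.

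I expect the main obstacle to be the characterization step, specifically the inclusion showing that every braid fixing $\mathcal{A}$ lies in the subgroup generated by Hilden's list. I would handle it by citing Hilden \cite{Hilden1975GeneratorsFT}, or Tawn, instead of reproving it. A second technical point is showing that the arc encoding is a complete invariant of isotopy classes rel punctures, which I would take from the standard theory of curve and arc coordinates. Since the proposition is attributed to the cited works, the proof in the paper can appropriately be this reduction plus the citations.
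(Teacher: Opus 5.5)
There is a genuine gap. Your algorithm rests on the claim that $H_{2n}$ is the stabilizer of the isotopy class of the \emph{planar} arc system $\mathcal{A}\subset D_{2n}$, and that claim is false.

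The planar stabilizer contains $\sigma_1$ and the band swaps $\sigma_{2i}\sigma_{2i-1}\sigma_{2i+1}\sigma_{2i}$. It does not contain the Hilden generator $\sigma_2\sigma_1^2\sigma_2$. To see this, note that $\sigma_2\sigma_1^2\sigma_2\sigma_1^2=(\sigma_2\sigma_1)^3$. Hence $\sigma_2\sigma_1^2\sigma_2=\Delta_3^2\sigma_1^{-2}$, where $\Delta_3^2$ is the full twist on the first three strands. On the disk this acts as $T_cT_e^{-1}$, with $c$ the round curve enclosing punctures $1,2,3$ and $e$ the curve enclosing $1,2$; it is the push of puncture $3$ once around the pair $\{1,2\}$.

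Let $d$ be the boundary of a neighbourhood of $a_2$. Then $d$ misses $e$ but $i(c,d)=2$, so $i(T_c(d),d)=4$ and $d$ is moved. The braid is pure, so the image of $a_2$ is an arc from $3$ to $4$. It is not isotopic to $a_2$ rel punctures, since otherwise the image of $d$ would be $d$.

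The ``slide of one arc around a neighbouring one'' you invoke is an isotopy in the $3$-ball: the end of the second bridge passes over the first bridge. It has no counterpart inside the disk. Likewise, Hilden's theorem says his generators generate the braids that extend to homeomorphisms of the pair $(B^3_+,\mathcal{A})$ with $\mathcal{A}$ the \emph{trivial tangle}, not the planar stabilizer. So your test ``$\{\gamma(a_i)\}=\mathcal{A}$'' decides membership in a proper subgroup and wrongly rejects elements of $H_{2n}$, however efficiently the Dynnikov coordinates are computed.

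What is missing is a genuinely three-dimensional criterion, and that is exactly what the paper uses: Tawn's Theorem~\ref{Thm:hildenmembership}, that $\beta\in H_{2n}$ if and only if $\beta_*(y_i)=1$ in $\pi_1(B^3_+\setminus\mathcal{A})$ for every $i$. That group is free on $x_1,\dots,x_n$, since the $y_j$ die. Decidability is then immediate:
\begin{enumerate}
\item compute $\beta_*(y_i)\in F_{2n}$ by the Artin action, which takes time polynomial in the length of $\beta$;
\item set every $y_j=1$;
\item freely reduce and check whether the result is trivial.
\end{enumerate}
Your arc and curve coordinates could be kept only if paired with such a test. For instance, you would compute the image curves $\beta(d_i)$ and decide whether each bounds a disk in the tangle complement, which by Dehn's lemma is the same $\pi_1$ question. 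Your fallback route does not close the gap either: a finite presentation of $H_{2n}$ does not by itself make the subgroup membership problem decidable.
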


In \cite{SSpolytimealgorthims} Schleimer showed that the membership problem for the mapping class group of a handlebody, viewed as a subgroup of the mapping class group of its boundary, is solvable in polynomial time. Building on this work, Tawn \cite{Tawn2009} adapted Schleimer’s argument to the braid-theoretic setting and proved that the membership problem for the Hilden subgroup is likewise decidable in polynomial time. We present the geometric structure of this criterion below, which we will use to show the Garside element is in the Hilden Subgroup.

\subsubsection{The Mapping Class Perspective}

To solve the membership problem algorithmically, we identify the braid group $B_{2n}$ with the mapping class group of the $2n$-punctured disk, denoted $X_{2n}$, fixing the boundary pointwise \cite{BLMCGs}. A braid $\beta \in B_{2n}$ induces an automorphism of the fundamental group,
\[
\beta_* \colon \pi_1(X_{2n}) \to \pi_1(X_{2n}).
\]
Since $\pi_1(X_{2n})$ is isomorphic to the free group $F_{2n}$, we track the action of $\beta$ algebraically via its induced action on a free generating set adapted to the Hilden subgroup, as shown in Figure~\ref{fig:nicegens}.

\begin{figure}[H]
    \centering
    \includegraphics[width=0.6\linewidth]{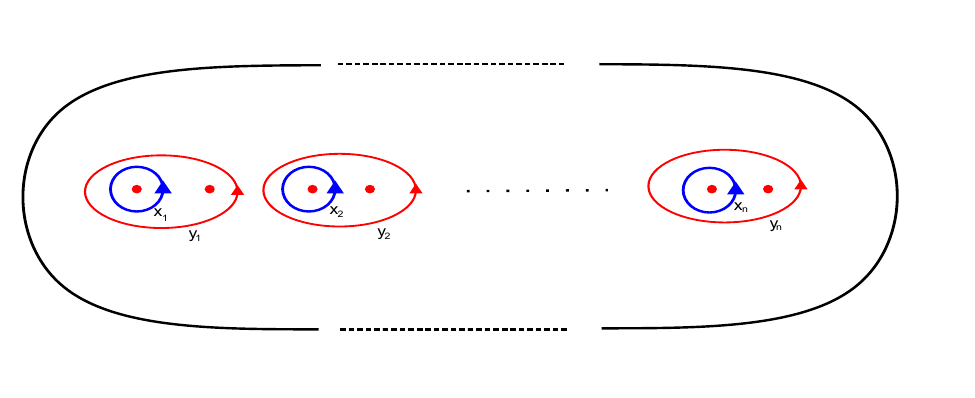}
    \caption{A generating set for $\pi_1(X_{2n})$ chosen for the Hilden subgroup.}
    \label{fig:nicegens}
\end{figure}

Consider the inclusion of $X_{2n}$ into a 3-ball containing $n$ unknotted arcs, denoted $B^3_+ \setminus \mathcal{A}$. Under this filling, the loops labeled $y_i$ bound disks in the complement and become null-homotopic:
\[
\iota: X_{2n} \hookrightarrow B^3_+ \setminus \mathcal{A}, \qquad
\iota_*: \langle x_1, \dots, x_n, y_1, \dots, y_n \rangle 
\longrightarrow \langle x_1, \dots, x_n \rangle.
\]

\begin{figure}[H]
    \centering
    \includegraphics[width=0.6\linewidth]{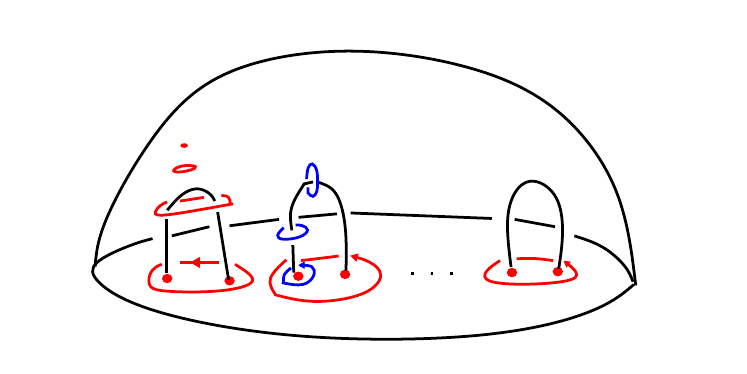}
    \caption{The generators $y_i$ bound disks when $X_{2n}$ is viewed inside the 3-ball.}
    \label{fig:gensin3ball}
\end{figure}

Geometrically, a braid $\beta \in B_{2n}$ lies in the Hilden subgroup if and only if its plat closure forms an $n$-component unlink, meaning the closure admits $n$ disjoint splitting spheres. This geometric property can be tracked efficiently by evaluating the image of the loops $y_i$ under the braid action.

\begin{figure}[H]
    \centering
    \includegraphics[width=0.7\linewidth]{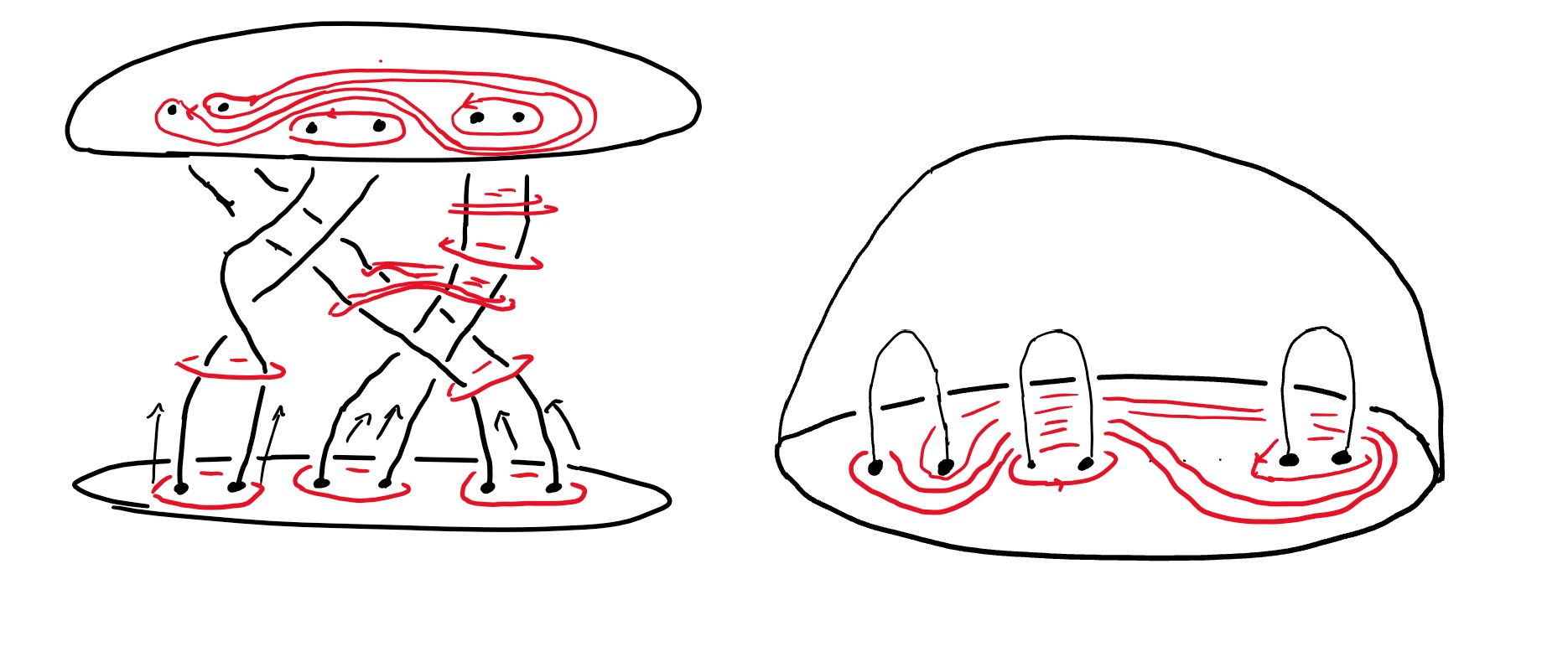}
    \caption{A geometric view of Tawn's approach to testing Hilden subgroup membership.}
    \label{fig:tawndoublecosetidea}
\end{figure}

The free group word $\beta_*(y_i)$ can be computed in time polynomial in the word length of $\beta$ \cite{SSpolytimealgorthims}. We conclude with Tawn’s formal algebraic criterion:

\begin{theorem}{\cite{Tawn2009}}\label{Thm:hildenmembership}
A braid $\beta \in B_{2n}$ lies in the Hilden group $H_{2n}$ if and only if, for each $i = 1,\dots,n$, the induced element $\beta_*(y_i) = 1$ in $\pi_1(B_{+}^3\setminus\mathcal{A}).$
\end{theorem}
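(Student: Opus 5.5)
The plan is to identify $H_{2n}$ with the subgroup of mapping classes of $X_{2n}$ that extend over the pair $(B^3_+,\mathcal{A})$, and then translate the extension question into the algebraic condition on the loops $y_i$.

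\textbf{Step 1 (stabilizer description).} First I would establish, or take from Hilden \cite{Hilden1975GeneratorsFT}, that $\beta\in H_{2n}$ if and only if the homeomorphism of $X_{2n}$ representing $\beta$ extends to a homeomorphism of $B^3_+$ carrying $\mathcal{A}$ to itself. One direction is checked on the generators $\sigma_1$, $\sigma_2\sigma_1^2\sigma_2$ and $\sigma_{2i}\sigma_{2i-1}\sigma_{2i+1}\sigma_{2i}$ from Figure~\ref{fig:generatorsH4}. Each is visibly realized by an isotopy of the unknotted arcs, namely a half twist of one arc, a loop of one arc around its neighbour, or an exchange of two adjacent arcs. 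The reverse inclusion is Hilden's generation theorem, which I would cite rather than reprove.

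\textbf{Step 2 (necessity).} Suppose $\beta\in H_{2n}$. By Step 1, $\beta$ extends to a homeomorphism $\Phi$ of $B^3_+\setminus\mathcal{A}$. Then $\iota_*\circ\beta_*=\Phi_*\circ\iota_*$, so $\beta_*$ preserves $\ker\iota_*$. This kernel is the normal closure of $y_1,\dots,y_n$. Hence $\beta_*(y_i)\in\ker\iota_*$, that is, $\beta_*(y_i)=1$ in $\pi_1(B^3_+\setminus\mathcal{A})$. Equivalently, it is enough to check that each generator sends every $y_j$ to a conjugate of some $y_k^{\pm1}$; this is a direct computation from Figure~\ref{fig:nicegens}.

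\textbf{Step 3 (sufficiency).} Suppose $\beta_*(y_i)=1$ for every $i$. Each $y_i$ is represented by a simple closed curve $c_i$ enclosing the two endpoints of the $i$-th arc, so each $\beta(c_i)$ is an essential simple closed curve on the boundary surface that is null-homotopic in $M=B^3_+\setminus N(\mathcal{A})$.
\begin{itemize}
\item By Dehn's lemma, each $\beta(c_i)$ bounds an embedded disk $D_i\subset M$.
\item The curves $\beta(c_i)$ are pairwise disjoint. A standard innermost-disk exchange, using that $M$ is irreducible (it is a handlebody), makes the disks $D_i$ pairwise disjoint.
\item Cutting $M$ along $D_1,\dots,D_n$ yields pieces that are a ball minus a trivial arc, each containing one pair of endpoints.
\end{itemize}
Comparing with the standard disks bounded by the $c_i$, I would build a homeomorphism of $(B^3_+,\mathcal{A})$ that agrees with $\beta$ on the boundary. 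It is assembled piece by piece, using the Alexander trick on balls and uniqueness of trivial arcs in a ball. By Step 1 this gives $\beta\in H_{2n}$.

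The main obstacle is Step 3, specifically the passage from the purely homotopical condition $\beta_*(y_i)=1$ to disjoint embedded disks in the correct complementary pieces. This needs Dehn's lemma together with a careful disk-swapping argument. It also requires checking that the images of the $x_i$ impose no extra constraint, because once the meridian disks extend, the complementary regions are balls and the extension there is automatic. I would try to follow Schleimer's handlebody argument \cite{SSpolytimealgorthims}, specialized by Tawn \cite{Tawn2009}, adapting it to the punctured-disk setting with arcs in place of handles.
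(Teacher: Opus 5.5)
Your proposal is correct and follows essentially the same route as the paper. For necessity, an extension of $\beta$ over $(B^3_+,\mathcal{A})$ carries the disk-bounding loops $y_i$ to loops that are trivial in $\pi_1(B^3_+\setminus\mathcal{A})$; for sufficiency, Dehn's lemma gives disjoint disks that cut $B^3_+$ into balls containing trivial arcs, and the Alexander trick extends $\beta$ over them. You also make explicit two steps the paper glosses over: the innermost-disk argument for disjointness of the disks, and the appeal to Hilden's generation theorem to identify $H_{2n}$ with the stabilizer of $\mathcal{A}$, which the paper simply takes as the definition.
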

\begin{proof}
Suppose $\beta \in H_{2n}$. By definition, $\beta$ preserves the set of arcs $\mathcal{A}$ up to isotopy in $B^3_+$. Therefore, any loop $y_i \subset X_{2n}$ that bounds a disk in $B^3_+ \setminus \mathcal{A}$ is sent by $\beta$ to a loop that also bounds a disk in the filled complement, satisfying $\beta_*(y_i) = 1 \in \pi_1(B^3_+ \setminus \mathcal{A})$.

Conversely, suppose $\beta \in B_{2n}$ maps each loop $y_i$ to a null-homotopic word. By Dehn's Lemma, we can pick disjoint disks $D_i$ and $D_i'$ in $B^3_+ \setminus \mathcal{A}$ bounding $y_i$ and $\beta(y_i)$ respectively. By Alexander's Trick, the boundary homeomorphism extends across the disks. The collection of disks separates $B^3_+$ into $n$ balls each containing a single trivial arc. Because $\beta$ is the identity on $\partial X_{2n}$, it extends to a global homeomorphism of the 3-ball mapping $\mathcal{A}$ to itself setwise, matching the definition of the Hilden subgroup.
\end{proof}

\section{The Positive Cone of a Hilden Double Coset}

Let $\beta \in B_{2n}$ and denote by $[\beta] = H_{2n}\beta H_{2n}$ its Hilden double coset class. We define the \emph{positive cone} $[\beta]^+$ of this class to be the subset of all algebraically positive braid words representing it:
\[
[\beta]^+ := \{\gamma \in [\beta] \mid \gamma \in B_{2n}^+\}.
\]

\begin{lemma}\label{lemma:positivecone}
The positive cone $[\beta]^+$ is non-empty for any braid $\beta \in B_{2n}$.
\end{lemma}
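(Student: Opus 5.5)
Since $[\beta]$ is a union of cosets of $H_{2n}$, it is enough to multiply $\beta$ on one side by a suitable element of $H_{2n}$ and land in $B_{2n}^+$. The natural tool is the Garside decomposition. By the Proposition of Garside we write $\beta = \Delta^m P_0$ with $m \in \mathbb{Z}$ and $P_0 \in B_{2n}^+$. If $m \ge 0$, then $\beta$ is already a product of positive braids, so $\beta \in [\beta]^+$ and there is nothing more to do. The substantive case is $m < 0$, where we want to cancel the negative power of $\Delta$ using a Hilden element.

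The key step is to show that $\Delta \in H_{2n}$, or at least $\Delta^2 \in H_{2n}$, which suffices. I would prove $\Delta \in H_{2n}$ with Theorem~\ref{Thm:hildenmembership}. Geometrically, $\Delta$ is the half twist of the $2n$-punctured disk: a rotation by $\pi$ that reverses the order of the punctures, sending puncture $j$ to puncture $2n+1-j$. This rotation carries the pair $\{2i-1, 2i\}$ to $\{2n+1-2i, 2n+2-2i\}$, which is again a pair of the form $\{2k-1,2k\}$ with $k = n+1-i$. So $\Delta$ permutes the bridge pairs, and the loop $y_i$ encircling the $i$-th pair is sent to a loop isotopic to $y_{n+1-i}$ (up to orientation or conjugation). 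It is therefore null-homotopic in $\pi_1(B^3_+ \setminus \mathcal{A})$, and the criterion gives $\Delta \in H_{2n}$. I expect the main obstacle to be making this precise with the specific generating set of Figure~\ref{fig:nicegens}, since one has to check that the image of each $y_i$ really is a conjugate of some $y_k^{\pm1}$ and not a more complicated word.

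If that verification proves awkward, I would fall back on two simpler routes. First, $\Delta^2$ is the full twist, which acts on $\pi_1$ by conjugation by the boundary loop, so $\Delta^2_*(y_i)$ is a conjugate of $y_i$ and is trivially null-homotopic; this gives $\Delta^2 \in H_{2n}$ immediately. Second, one could write $\Delta$ explicitly as a product of Hilden generators. The identity $\Delta_{2n} = \Delta_{2n-2}\cdot(\sigma_{2n-1}\cdots\sigma_1)(\sigma_{2n-1}\cdots\sigma_2)$-type recursion suggests an induction on $n$ that peels off Hilden generators of the form $\sigma_{2i}\sigma_{2i-1}\sigma_{2i+1}\sigma_{2i}$.

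With $\Delta \in H_{2n}$ in hand, set $\gamma = \Delta^{-m}\beta = P_0$ for $m<0$. Then $\gamma \in H_{2n}\beta \subseteq [\beta]$, and $\gamma$ is positive, so $[\beta]^+ \neq \emptyset$. If only $\Delta^2 \in H_{2n}$ is available, choose $k$ with $2k \ge -m$ and take $\gamma = \Delta^{2k}\beta = \Delta^{2k+m}P_0$. This is again positive because $2k+m \ge 0$, and it lies in $[\beta]$, which completes the proof.
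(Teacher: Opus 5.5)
Your proposal is correct and follows the paper's proof. Both arguments take the Garside decomposition $\beta=\Delta^m P_0$ and show $\Delta\in H_{2n}$ via Tawn's criterion: the half twist carries the pair $\{2i-1,2i\}$ to $\{2n+1-2i,2n+2-2i\}$, so $\Delta_*(y_i)$ is a conjugate of $y_{n+1-i}$ and is trivial in $\pi_1(B^3_+\setminus\mathcal{A})$. Left-multiplying by $\Delta^{-m}$ then gives $P_0\in[\beta]^+$.

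Your $\Delta^2$ fallback is also valid. The full twist acts by an inner automorphism, so $\Delta^2\in H_{2n}$ with no further checking, and this suffices for the lemma. Note that it only yields $\Delta^{2k+m}P_0$ rather than $P_0$ itself, and $P_0$ is the representative the paper uses later.
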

\begin{proof}
By Garside's theorem, any braid can be factored as $\beta = \Delta^m P_0$, where $P_0 \in B_{2n}^+$ is a positive braid word. The critical observation is that the Garside element $\Delta$ itself belongs to the Hilden subgroup $H_{2n}$. Geometrically, the Garside element $\Delta$ represents a positive half-twist of the entire $2n$-punctured disk. Because the plat closure pairs punctures $\{1,2\}, \{3,4\}, \dots \{2n-1, 2n\}$, this global half-twist maps the bridge pair $\{1,2\}$ onto the bridge pair $\{2n-1, 2n\}$ and the bridge pair $\{3,4\}$ onto the bridge pair $\{2n-3, 2n-2\}$ etc., preserving the set of unknotted arcs $\mathcal{A}$. Therefore, for $\Delta\in B_{2n}$ we have 
$\Delta_*(y_i)=y_{n-i+1}$ for $1\leq i \leq n.$  The inclusion of each of these elements into $\pi_1(B_+^3\setminus\mathcal{A})$ is trivial so that by Theorem~\ref{Thm:hildenmembership}  $\Delta \in H_{2n}.$
Alternatively, one may verify via double coset isotopies that the plat closure of $\Delta$ yields an $n$-component unlink (see Figure~\ref{fig:garsidehildenmoves}).

Because $\Delta \in H_{2n}$, any power $\Delta^m$ can be absorbed by the left action of the Hilden subgroup. Consequently, $\beta = \Delta^m P_0 \in H_{2n}P_0 H_{2n}$, proving that $P_0 \in [\beta]^+$.
\end{proof}

\begin{figure}[H]
    \centering
    \includegraphics[width=0.6\linewidth]{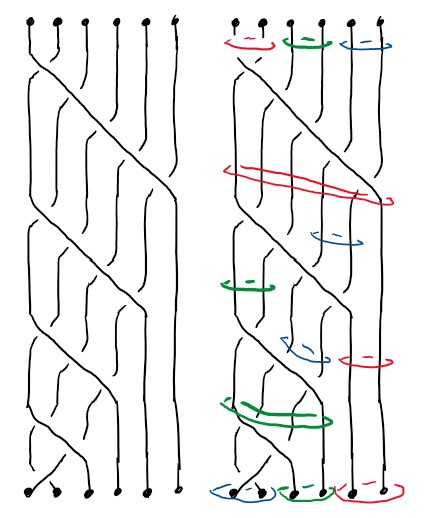}
    \caption{Tracking the action of the Garside element on the $y_i$ boundary loops shows the action of $\Delta$ on each $y_i.$}
    \label{fig:garsideoncurves}
\end{figure}

\begin{figure}[H]
    \centering
    \includegraphics[width=0.75\linewidth]{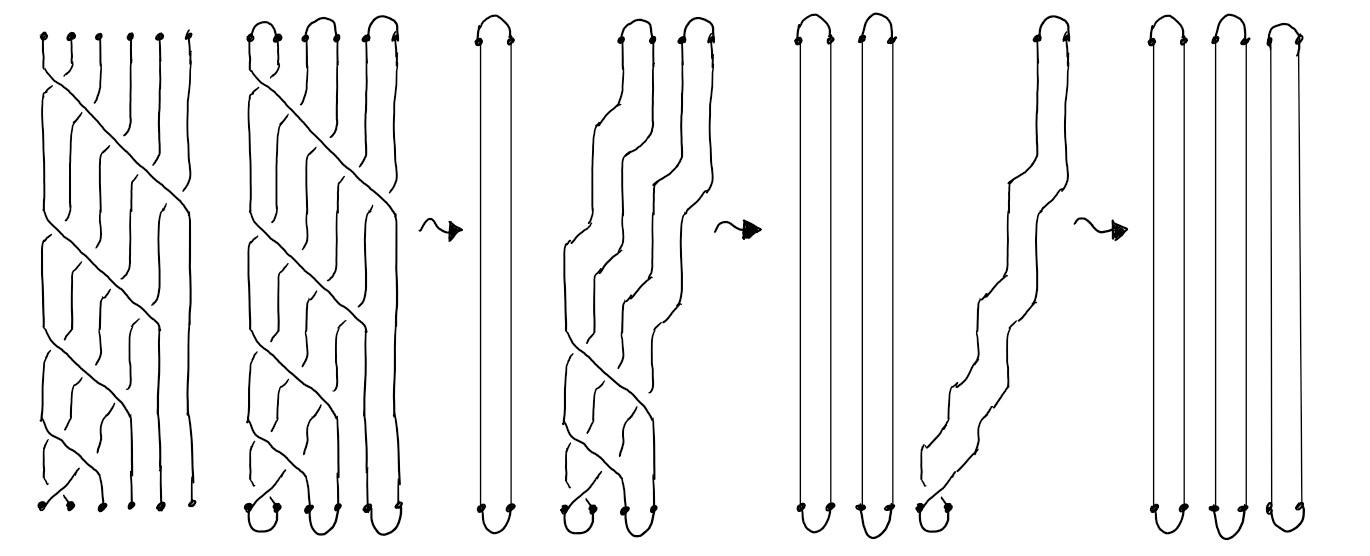}
    \caption{Isotoping the plat closure of $\Delta$ into a split unlink using double coset moves.}
    \label{fig:garsidehildenmoves}
\end{figure}

 Since the set of positive braid words is well-ordered lexicographically, each double coset contains a unique minimal representative.

\begin{definition}
Using the Artin generators for $B_{2n},$ the \emph{lexicographical minimum} of $[\beta]^+$ is the braid word $\gamma_0 \in [\beta]^+$ that is lexicographically smaller than all other elements in $[\beta]^+.$
\end{definition}
The minimal element $\gamma_0 \in [\beta]^+$ necessarily minimizes word length within the positive cone. We denote the word length of a braid by $\text{Len}(\beta).$ The element $\gamma_0$ serves as a unique canonical representative for the double coset class $[\beta]$.

We note that the existence of canonical representatives for plat presentation classes has also recently been established by Ozawa \cite{Ozawa2026}, who leveraged a Dehornoy-type total ordering on the double cosets. Our approach offers a complementary, computationally oriented perspective: by utilizing the Garside decomposition, we restrict the search space entirely to the positive cone, where the well-ordering of positive braids trivially yields a canonical form. In the next section, we show that this specific construction allows us to explicitly bound the complexity of this representative.

\begin{remark}
It is important to distinguish between the \emph{existence} of a positive representative in $[\beta]^+$ and the algorithmic computability of the canonical lexicographical minimum $\gamma_0$. Navigating the double coset requires multiplication by Hilden generators, which generally introduces negative crossings. Restoring a positive braid word after such a double coset move requires passing back through the Garside normal form, which may temporarily reduce the power of $\Delta$ and pull the representative out of the positive cone. Consequently, while the positive cone is guaranteed to be non-empty and bounded, navigating it directly remains a non-trivial algorithmic challenge.
\end{remark}

\section{The Dehornoy Order of Braids and Plat Closures}

We now consider how these positive plat representatives behave with respect to the Dehornoy ordering of the braid group \cite{Dehornoy}.

\subsection{The Dehornoy Ordering of Braid Groups}

\begin{definition}
A braid word $\beta \in B_n$ is \emph{$D$-positive} if it contains at least one occurrence of $\sigma_i$ for some index $i$ ($1 \le i \le n-1$), and no occurrences of $\sigma_1^{\pm 1}, \dots, \sigma_{i-1}^{\pm 1}$ or $\sigma_i^{-1}$. A braid is \emph{$D$-negative} if $\beta^{-1}$ is $D$-positive. For $\beta_1, \beta_2 \in B_n$, we write $\beta_1 <_D \beta_2$ if the braid $\beta_1^{-1}\beta_2$ is $D$-positive.
\end{definition}

Crucially, every non-trivial braid is either $D$-positive or $D$-negative, but never both. This total order is right-invariant: if $\beta_1 <_D \beta_2$, then $\beta_1\alpha <_D \beta_2\alpha$ for any $\alpha \in B_n$. We track intervals bounded by powers of the Garside element using the standard notation:
\[
(\Delta^a,\Delta^b):=\{\beta \in B_n \mid \Delta^a <_D \beta <_D \Delta^b\}.
\]
We recall a useful framing property established by Malyutin and Netsvetaev:
\begin{lemma}[Malyutin \& Netsvetaev \cite{Malyutin_Netsvetaev}]\label{lemma:malyutin}
Suppose a braid $\beta \in B_n$ is represented by a word containing $r$ occurrences of $\sigma_1$ and $s$ occurrences of $\sigma_1^{-1}$ with $r+s>0.$ Then $\beta \in (\Delta^{-2s},\Delta^{2r}).$
\end{lemma}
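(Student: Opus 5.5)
The plan is to reduce both inequalities to checking that certain words are $\sigma_1$-positive, using two facts: that $\Delta^2$ is central, and that $\Delta^2$ factors so that one factor cancels against a single $\sigma_1^{-1}$ and leaves exactly one positive $\sigma_1$.

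First I record the factorization. Let $\mathrm{sh}$ denote the shift $\sigma_i\mapsto\sigma_{i+1}$, and let $\mathrm{sh}(\Delta_{n-1}^2)$ be the full twist on strands $2,\dots,n$; it is a word in $\sigma_2,\dots,\sigma_{n-1}$ only. Put
\[
A:=\sigma_1\sigma_2\cdots\sigma_{n-1}\sigma_{n-1}\cdots\sigma_2\sigma_1 ,
\]
the loop of strand $1$ around all the others. The standard identity is $\Delta^2=\mathrm{sh}(\Delta_{n-1}^2)\,A$. Since $\Delta^2$ is central in $B_n$, it commutes with $\mathrm{sh}(\Delta_{n-1}^2)$. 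Hence $A=\mathrm{sh}(\Delta_{n-1}^2)^{-1}\Delta^2$ also commutes with $\mathrm{sh}(\Delta_{n-1}^2)$, and so $\Delta^2=A\,\mathrm{sh}(\Delta_{n-1}^2)$. Therefore
\[
\sigma_1^{-1}\Delta^2=\sigma_2\cdots\sigma_{n-1}\sigma_{n-1}\cdots\sigma_2\,\sigma_1\,\mathrm{sh}(\Delta_{n-1}^2),
\]
a word with exactly one occurrence of $\sigma_1$, which is positive, and no $\sigma_1^{-1}$. Inverting, $\sigma_1\Delta^{-2}$ is represented by a word with exactly one $\sigma_1^{-1}$ and no $\sigma_1$.

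For the upper bound, I must show $\beta<_D\Delta^{2r}$, i.e. that $\beta^{-1}\Delta^{2r}$ is $D$-positive. Take the given word for $\beta$ and invert it to get a word for $\beta^{-1}$ with $r$ letters $\sigma_1^{-1}$ and $s$ letters $\sigma_1$. Since $\Delta^2$ is central, I can distribute the $r$ copies of $\Delta^2$ so that one sits immediately after each $\sigma_1^{-1}$. Then I replace each block $\sigma_1^{-1}\Delta^2$ by the word above. The result represents $\beta^{-1}\Delta^{2r}$, contains no $\sigma_1^{-1}$, and contains $r+s>0$ letters $\sigma_1$. So it is $\sigma_1$-positive, hence $D$-positive. (If $r=0$ the statement is just that $\beta^{-1}$ itself contains $\sigma_1$ but not $\sigma_1^{-1}$.)

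The lower bound is symmetric. I must show $\Delta^{2s}\beta$ is $D$-positive, because $(\Delta^{-2s})^{-1}\beta=\Delta^{2s}\beta$. I insert one $\Delta^2$ immediately after each of the $s$ letters $\sigma_1^{-1}$ in the word for $\beta$ and rewrite each block $\sigma_1^{-1}\Delta^2$ as above. This yields a word with $r+s$ letters $\sigma_1$ and no $\sigma_1^{-1}$.

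The only step needing care is the factorization $\Delta^2=A\,\mathrm{sh}(\Delta_{n-1}^2)$ with $A$ on the left, so that $\sigma_1^{-1}$ cancels the leading $\sigma_1$ of $A$. The other order, $\mathrm{sh}(\Delta^2_{n-1})A$, would leave a stray $\sigma_1^{-1}$ in front. The centrality argument above handles this, and the identity $\Delta_n^2=\mathrm{sh}(\Delta_{n-1}^2)A$ is classical; I would cite it, or verify it inductively from the definition of $\Delta$.
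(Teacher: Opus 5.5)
Your proof is correct. The paper gives no proof of this lemma; it is simply quoted from Malyutin and Netsvetaev. So there is no internal argument to compare against, and what you give is the standard proof of their result.

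The argument runs as follows. You exhibit a word for $\sigma_1^{-1}\Delta^2$ containing exactly one $\sigma_1$ and no $\sigma_1^{-1}$. You then use centrality of $\Delta^2$ to attach one copy of $\Delta^2$ to each $\sigma_1^{-1}$ in the word for $\beta^{-1}$ (respectively $\beta$). This produces a $\sigma_1$-positive word for $\beta^{-1}\Delta^{2r}$ (respectively $\Delta^{2s}\beta$) with $r+s>0$ letters $\sigma_1$. The identity $\Delta^2=\mathrm{sh}(\Delta_{n-1}^2)\,A$ and the degenerate cases $r=0$, $s=0$ and $n=2$ all check out.

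Two minor remarks:
\begin{itemize}
\item The commutation step you flag as the delicate point is not actually needed. Since $\Delta^2$ is central and $A$ also ends in $\sigma_1$, you can write directly
\[
\sigma_1^{-1}\Delta^2=\Delta^2\sigma_1^{-1}=\mathrm{sh}(\Delta_{n-1}^2)\,\sigma_1\sigma_2\cdots\sigma_{n-1}\sigma_{n-1}\cdots\sigma_2 .
\]
So the ``other order'' never leaves a stray $\sigma_1^{-1}$.
\item It is good that you verify $D$-positivity of $\beta^{-1}\Delta^{2r}$ and $\Delta^{2s}\beta$ straight from the paper's definition of $<_D$ rather than appealing to invariance. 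With that definition the order is left-invariant, not right-invariant as the paper asserts. For example, $\sigma_1\sigma_2^{-1}$ is $D$-positive but its conjugate $\Delta(\sigma_1\sigma_2^{-1})\Delta^{-1}=\sigma_2\sigma_1^{-1}$ in $B_3$ is $D$-negative. Your argument is unaffected by that misstatement.
\end{itemize}
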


Symmetric intervals coming from positive and negative powers of $\Delta$ are widely used to partition $B_n$ via the Dehornoy floor.

\begin{definition}
The \emph{Dehornoy floor} of a braid $\beta \in B_n,$ denoted $[\beta]_D,$ is the non-negative integer defined by:
\[
\lbrack\beta]_D = \min\{m \in \mathbb{Z}_{\geq 0} \mid \beta \in (\Delta^{-2(m-1)},\Delta^{2(m+1)})\}.
\]
\end{definition}

A rephrasing of Lemma~\ref{lemma:malyutin} yields an immediate upper bound on this floor:
\begin{lemma}
Suppose a braid $\beta \in B_n$ is represented by a word containing $r$ occurrences of $\sigma_1$ and $s$ occurrences of $\sigma_1^{-1}$ with $r+s>0.$ Then $[\beta]_D < \max\{r,s\}.$
\end{lemma}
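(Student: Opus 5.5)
The plan is to deduce the floor bound directly from Lemma~\ref{lemma:malyutin} and the definition of $[\beta]_D$. Everything reduces to comparing nested intervals of the form $(\Delta^{2a},\Delta^{2b})$.

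\emph{Step 1 (monotonicity of $\Delta^{2k}$).} I would first check that $k\mapsto\Delta^{2k}$ is strictly increasing for $<_D$. Indeed $\Delta^{-2k}\Delta^{2(k+1)}=\Delta^2$ is a positive word containing $\sigma_1$, so it is $D$-positive, i.e.\ $\Delta^{2k}<_D\Delta^{2(k+1)}$. By transitivity, $a\le a'$ and $b'\le b$ give
\[
(\Delta^{2a'},\Delta^{2b'})\subseteq(\Delta^{2a},\Delta^{2b}).
\]

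\emph{Step 2 (enlarge the Malyutin--Netsvetaev interval).} Put $M=\max\{r,s\}$. Lemma~\ref{lemma:malyutin} gives $\beta\in(\Delta^{-2s},\Delta^{2r})$, and by Step 1 this lies inside $(\Delta^{-2M},\Delta^{2M})$. To bound the floor, I would then look for the least $m\ge 0$ whose interval $(\Delta^{-2(m-1)},\Delta^{2(m+1)})$ contains $(\Delta^{-2s},\Delta^{2r})$, i.e.\ with $m-1\ge s$ and $m+1\ge r$.

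\emph{Step 3 (the main obstacle: the index bookkeeping).} This is where I expect real trouble, because as literally written the lower endpoint $\Delta^{-2(m-1)}$ rises as $m$ grows. The intervals in the definition are therefore not nested in the way a floor function needs, and $\beta\in(\Delta^{-2(m-1)},\Delta^{2(m+1)})$ does not follow from Step 2 for small $m$.

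In fact the stated inequality fails in a small case. Take $\beta=\sigma_1^{-1}$, so $r=0$ and $s=1$. For $m=0$ the interval is $(\Delta^{2},\Delta^{2})$, which is empty. For $m=1$ it is $(1,\Delta^4)$, which misses $\beta$ since $\beta<_D 1$. So $[\beta]_D\ge 2$, while $\max\{r,s\}=1$.

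So before proving anything I would fix the definition to the standard one:
\[
[\beta]_D=\min\{m\ge0 \mid \beta\in(\Delta^{-2(m+1)},\Delta^{2(m+1)})\}.
\]
This is presumably what is intended, given the phrase ``symmetric intervals''.

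With this correction the proof finishes quickly. Take $m=M-1$, assuming $M\ge1$, which holds because $r+s>0$. Steps 1 and 2 give $\beta\in(\Delta^{-2M},\Delta^{2M})=(\Delta^{-2(m+1)},\Delta^{2(m+1)})$. Hence $[\beta]_D\le M-1<\max\{r,s\}$.

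If instead one insists on the definition exactly as printed, my plan is to prove only the weaker bound that Step 2's requirement $m-1\ge s$, $m+1\ge r$ yields. This gives $[\beta]_D\le\max\{s+1,\,r-1,\,0\}$, which is $\le\max\{r,s\}+1$. I would note that the strict inequality cannot hold under that reading.
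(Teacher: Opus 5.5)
Your corrected argument is exactly the paper's intended deduction: the paper gives only the sentence ``a rephrasing of Lemma~\ref{lemma:malyutin} yields an immediate upper bound''. With the standard symmetric intervals $(\Delta^{-2(m+1)},\Delta^{2(m+1)})$, taking $m=\max\{r,s\}-1$ does the job. Your counterexample $\sigma_1^{-1}$ is also correct: it shows that the printed exponent $-2(m-1)$ is a typo under which the lemma is false, and even $\beta=\sigma_1$ gives $[\beta]_D=1=\max\{r,s\}$.

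One correction to your Step~3 diagnosis. As $m=0,1,2,\dots$ the lower endpoint $\Delta^{-2(m-1)}$ runs through $\Delta^{2},1,\Delta^{-2},\dots$, so it \emph{falls} as $m$ grows, and the printed intervals are nested. The actual defect is the shift by two in the lower exponent: the $m=0$ interval is empty, which forces $[\beta]_D\ge 1$ for every $\beta$. Monotonicity does not fail.
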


\subsection{Dehornoy Bounds on Plat Presentation Classes}\label{sec:dehornoy-bounds}

As proven in Lemma~\ref{lemma:positivecone}, the fact that $\Delta \in H_{2n}$ means that for any braid $\beta = \Delta^m P_0$ in Garside normal form, the factor $\Delta^m$ can be cleanly removed via left-multiplication by the Hilden subgroup without changing the link type of the plat closure. Thus, the plat closures $\hat{\beta}$ and $\hat{P}_0$ share an identical Hilden double coset class.

This behavior contrasts sharply with the closed braid setting, where algebraic positivity restricts the link to a narrow, specialized topological class (positive links). This structural shift motivates a tailored adaptation of the Dehornoy floor metric specifically for plat presentations, allowing us to explicitly calculate the bound asserted in Theorem~\ref{thm:main}.

\begin{definition}
Given an $n$-bridge plat $B$ defined by the braid word $\beta$, the \emph{Dehornoy minimum} $B_D$ is the non-negative integer defined by:
\[
B_D := \min\{m \in \mathbb{Z}_{\geq 0} \mid \gamma \in [\beta]  \text{ and } \gamma \in (1,\Delta^m)\}.
\]
\end{definition}

The Dehornoy minimum $B_D$ is an invariant of the Hilden double coset of $\beta.$ It quantifies how  ``tangled up'' the plat position is even after removing the noise from the negative crossings and powers of the Garside element. 

We may now establish a bound on the complexity of possible positive representatives for the plat $B$.

\begin{lemma}\label{lemma:dehornoybound}
    Given a braid word $\beta \in B_{2n}$ whose Garside decomposition contains $p$ permutation braids, its Hilden double coset $[\beta]$ admits a positive representative with Dehornoy minimum bounded below by $1$ and above by $p(2n-1).$
\end{lemma}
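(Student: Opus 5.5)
The natural candidate is $P_0$ itself. Start from the Garside decomposition $\beta = \Delta^m P_0$ with $P_0 = p_1 p_2 \cdots p_p$ a product of $p$ permutation braids. By Lemma~\ref{lemma:positivecone}, $\Delta \in H_{2n}$, so $P_0 \in [\beta]^+$. It therefore suffices to bound the Dehornoy position of $P_0$: we aim to show $1 <_D P_0 <_D \Delta^{p(2n-1)}$ (when $P_0 \neq 1$). The Dehornoy minimum of the plat is at most the smallest $m$ with $P_0 \in (1,\Delta^m)$, so this gives the upper bound. The lower bound of $1$ is automatic: $m=0$ would require $1 <_D \gamma <_D \Delta^0 = 1$, which is empty.

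For the lower end of the interval, note that $P_0$ is a nonempty positive word in the Artin generators. Letting $i$ be the smallest index of a generator appearing in it, the word contains $\sigma_i$ and no $\sigma_j^{\pm1}$ with $j<i$ and no $\sigma_i^{-1}$. Hence $P_0$ is $D$-positive, that is, $1 <_D P_0$.

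For the upper end, I would count letters. A permutation braid in $B_{2n}$ is a positive braid in which any two strands cross at most once. In particular, strand pairs crossing at a $\sigma_1$ position are distinct within one permutation braid. The claim to establish is that each $p_k$ can be written with at most $2n-1$ occurrences of $\sigma_1$. The crude argument: in a permutation braid every crossing at position $1$ involves the strand currently in position $1$. Each such crossing swaps a different pair, and the strand leaving position 1 can never return via crossing the same strand again. So one bounds the number of position-1 crossings by the number of strands minus one, $2n-1$. Summing over the $p$ factors, $P_0$ has a word with $r \le p(2n-1)$ occurrences of $\sigma_1$ and $s=0$. If $r>0$, Lemma~\ref{lemma:malyutin} gives $P_0 \in (\Delta^{0}, \Delta^{2r}) \subseteq (1, \Delta^{2p(2n-1)})$.

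The main obstacle is the factor of $2$: Malyutin--Netsvetaev yields $\Delta^{2r}$, not $\Delta^{r}$. To reach $p(2n-1)$ I would try a sharper argument using that each permutation braid $p_k$ is a prefix of $\Delta$, hence $p_k \le_D \Delta$ in the prefix sense. Writing $\Delta = p_k q_k$ with $q_k$ positive gives $p_k \le_D \Delta$ because $p_k^{-1}\Delta = q_k$ is $D$-positive or trivial. Iterating with right-invariance and the relation $\Delta x = \phi(x)\Delta$ (where $\phi$ preserves positivity) would give $P_0 \le_D \Delta^p$, a bound even stronger than $p(2n-1)$. This is what I would attempt first; the chain is $p_1\cdots p_p \le \Delta p_2\cdots p_p = \phi(p_2\cdots p_p)\Delta$, then induction. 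If the order comparisons in this induction fail (left multiplication is not order-preserving in general), I would fall back to the letter count. There I would check whether the $\sigma_1$ count per permutation braid combined with the doubled exponent can be matched to $2n-1$ by the alternative normalization, or accept the weaker constant. The case $P_0=1$ (i.e. $p=0$) is degenerate and handled separately.
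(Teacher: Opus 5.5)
\textbf{Verdict: genuine gap.} As written, the proposal does not prove the stated constant $p(2n-1)$. The fallback route only gives $2p(2n-1)$, and the sharper route rests on an invalid step.

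Your fallback route is exactly the paper's proof: write $P_0=p_1\cdots p_p$, bound the number of letters $\sigma_1$ in each permutation braid by $2n-1$, and apply Lemma~\ref{lemma:malyutin} with $s=0$. Your worry about the factor $2$ is justified, and it applies to the paper's proof as well. Lemma~\ref{lemma:malyutin} gives only $P_0\in(1,\Delta^{2r})$ with $r\le p(2n-1)$. With the Dehornoy minimum defined via intervals $(1,\Delta^m)$, this route therefore proves $2p(2n-1)$. The constant $p(2n-1)$ is what you get when measuring in powers of $\Delta^2$, as for the Dehornoy floor. For the letter count itself there is a clean justification, valid for every positive word of a permutation braid: at each $\sigma_1$-crossing the two strands have not crossed before, so the strands successively occupying position $1$ have strictly increasing starting positions.

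The invalid step is $p_1x\le_D\Delta x$ with $x=p_2\cdots p_p$. You certify $p_1\le_D\Delta$ through $p_1^{-1}\Delta=q_1$, which uses the paper's definition, and then multiply on the right. Under that definition the order is invariant under left multiplication only; the paper's ``right-invariant'' is a slip, and your parenthetical has the sides reversed. Right multiplication really can reverse inequalities. For example, $1<_D\sigma_1\sigma_2^{-1}$, yet $\Delta^{-1}\sigma_2\sigma_1^{-1}\Delta=\sigma_{2n-2}\sigma_{2n-1}^{-1}$ is $D$-positive, i.e.\ $\sigma_1\sigma_2^{-1}\Delta<_D\Delta$. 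Your later step from $\phi(x)\le_D\Delta^{p-1}$ to $\phi(x)\Delta\le_D\Delta^p$ has the same defect.

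The repair is to work with left divisors throughout and pass to $<_D$ only at the end.
Write $\Delta=p_1q_1$, and suppose $p_2\cdots p_p$ left-divides $\Delta^{p-1}$.
Then $P_0$ left-divides $p_1\Delta^{p-1}$.
In turn, $p_1\Delta^{p-1}$ left-divides $p_1q_1\Delta^{p-1}=\Delta^p$, because $q_1\Delta^{p-1}=\Delta^{p-1}\phi^{p-1}(q_1)$ with $\phi^{p-1}(q_1)$ positive.
By induction, $P_0^{-1}\Delta^p$ is a positive braid, hence $D$-positive or trivial. So $P_0\le_D\Delta^p$ follows directly from the definition, with no invariance needed.
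Equality is impossible for $p\ge1$, because $\Delta$ does not left-divide $P_0$.
Thus $1<_D P_0<_D\Delta^p$, which gives the bound $p\le p(2n-1)$, sharper than claimed.
Alternatively, in the right-invariant convention (where $\alpha<\beta$ means $\beta\alpha^{-1}$ is $D$-positive), your chain works as written, since each $p_k$ also right-divides $\Delta$.

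Two smaller points:
\begin{itemize}
\item The case $p=0$ cannot be ``handled separately'': there the asserted upper bound $0$ contradicts the lower bound $1$, so the statement needs $p\ge1$.
\item With the paper's definition, every class in fact has Dehornoy minimum $1$. If $\Delta^k\le_D\gamma<_D\Delta^{k+1}$, then $\Delta^{-k}\gamma\in[\beta]$ satisfies $1\le_D\Delta^{-k}\gamma<_D\Delta$ by left-invariance, and $\sigma_1\in H_{2n}$ covers the case $\Delta^{-k}\gamma=1$. So the real content is the bound on the positive representative $P_0$ itself, which is what you set out to prove.
\end{itemize}
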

\begin{proof}
    We obtain a product of positive permutation braids $P_1P_2\cdots P_p$ by finding the Garside decomposition of $\beta.$ The maximal number of occurrences of any generator $\sigma_i$ inside a single permutation braid is bounded by the complexity of the Garside element $\Delta \in B_{2n}$, where $\sigma_1$ occurs exactly $2n-1$ times. It follows by Lemma~\ref{lemma:malyutin} that the Dehornoy minimum is less than $p(2n-1).$
\end{proof}

With both the geometric structure of the positive cone established and the Dehornoy bound calculated, the proof of our primary result follows immediately.
\MainTheorem*

\begin{proof}
By Lemma~\ref{lemma:positivecone}, absorbing any negative powers of the Garside element into the Hilden subgroup yields an entirely positive braid representative $P \in B_{2n}^+$ within the same double coset class. By the preceding lemma, the Dehornoy minimum $B_D$ of this representative is strictly bounded above by $p(2n-1)$, completing the proof.
\end{proof}

\begin{remark}
The upper and lower bounds established in Theorem~\ref{thm:main} and Lemma~\ref{lemma:dehornoybound} are fundamentally algebraic, relying on the Garside length $p$ of the initial braid word $\beta$. Recently, Ozawa proved that the Bridge Finiteness Conjecture would follow from a topological bound on the complexity of canonical representatives of Hilden double cosets (Proposition 6.3). Our theorem reduces this problem to obtaining a topological bound on the Garside length p of an arbitrary plat presentation. Indeed, if p can be bounded in terms of the link type and bridge number, then our Dehornoy bound immediately yields such a topological bound on the canonical representative, and hence the Bridge Finiteness Conjecture. We leave the exploration of this bound on $p$ to future work.
\end{remark}

\bibliographystyle{plain}
\bibliography{Sources}

\vspace{15mm}

\newcommand{\Addresses}{
  \footnotesize
  Seth Hovland, \\
  Department of Mathematics,\\
  University of Minnesota Duluth,\\
  Duluth, MN 55812, USA \\
  \textit{E-mail address}: \texttt{hovl0070@d.umn.edu} 
}

\Addresses

\end{document}